\newtheorem{theorem}{Theorem}[section]
\newcommand{\md}{\mathrm{d}}
\title{A Fast Algorithm for the Moments of Bingham Distribution}
\author{Yixiang Luo$^1$, Jie Xu$^2$ and Pingwen Zhang$^3$\\
\small
$^1$Courant Institute of Mathematical Sciences, New York University, New York 10012, USA\\
\small
$^2$Department of Mathematics, Purdue University, West Lafayette 47907, USA
\\
\small
$^3$LMAM \& School of Mathematical Sciences, Peking University, Beijing 100871, China\\
\small Email: yl4507@cims.nyu.edu, xu924@purdue.edu, pzhang@pku.edu.cn}
\date{\today}
\begin{document}

\maketitle

\begin{abstract}
We propose a fast algorithm for evaluating the moments of Bingham distribution.
The calculation is done by piecewise rational approximation, where interpolation and Gaussian integrals are utilized. Numerical test shows that the algorithm reaches the maximal absolute error less than $5\times 10^{-8}$ remarkably faster than adaptive numerical quadrature.
We apply the algorithm to a model for liquid crystals with the Bingham distribution to examine the defect patterns of rod-like molecules confined in a sphere, and find a different pattern from the Landau-de Gennes theory.

\vspace{12pt}
\textbf{Keywords}: Bingham distribution, directional data, piecewise rational approximation, liquid crystals. 
\end{abstract}

\section{Introduction}
The Bingham distribution is an important antipodally symmetric distribution on the unit sphere $\mathbb{S}^2$.
Although introduced from a statistical perspective \cite{bingham1974antipodally}, it has found applications in liquid crystals \cite{grosso2000closure,feng1998closure,ball2010nematic,han2014microscopic}, palaeomagnetism \cite{onstott1980application,kirschvink1980least,kent1983linear}, and various other fields involving data on the sphere \cite{descoteaux2009deterministic,alastrue2010use,kunze2004bingham,minka2000automatic,zhao2010fragment}. 

The density function of the Bingham distribution is given by
\begin{equation}\label{densOrg}
f(\bm{x} | B) = \left.{\exp \left( \sum_{i,j=1}^3B_{ij}x_ix_j \right)}
\middle /
{\int_{\mathbb{S}^{2}} \exp \left( \sum_{i,j=1}^3B_{ij}x_ix_j \right) \, \md \bm{x} }\right., \quad \bm{x}\in \mathbb{S}^2,
\end{equation}
where $B$ is a $3\times 3$ symmetric matrix.
A fundamental problem in computation involving the Bingham distribution is evaluating the moments
\begin{equation}\label{moments}
  \langle x_1^{n_1}x_2^{n_2}x_3^{n_3} \rangle=\int_{\mathbb{S}^{2}} f(\bm{x} | B) x_1^{n_1}x_2^{n_2}x_3^{n_3}\md\bm{x}.
\end{equation}
Denote
\begin{equation}\label{Z_Calc}
Z_{n_1n_2n_3}(B) = \int_{\mathbb{S}^{2}} x_1^{n_1} x_2^{n_2}x_3^{n_3} {\exp \left( \sum_{i,j=1}^3B_{ij}x_ix_j \right)} \, \md \bm{x}.
\end{equation}
Then the moments can be expressed as $\langle x_1^{n_1}x_2^{n_2}x_3^{n_3} \rangle=
Z_{n_1n_2n_3}(B)/Z_{000}(B)$.

Even when solving a single problem, the evaluation of moments \eqref{moments} may need to be done repeatedly. 
This is a typical case in the simulations of liquid crystals. 
In each iteration or time step, \eqref{moments} is computed at each grid point.
Generally speaking, the number of space discretization is $O(N^3)$. 
If we calculate \eqref{moments} by direct numerical quadrature, 
it costs $O(N^2)$ operations for every single calculations, 
leading to a total cost of $O(N^5)$. 
On the other hand, it should be noted that the density function \eqref{densOrg} is determined only by $B$, not relevant to parameters (and domains, etc.) specified by the problem to be solved. 
Therefore, it is desirable to have a fast algorithm for the evaluation of \eqref{moments}. 

The existing approximations of \eqref{moments} are designed only for special cases, and are not accurate enough to meet the demand of simulations in many problems. 
Kent \cite{kent1987asymptotic} proposed simple expansions for the zeroth and second moments. The relative error is about 0.1\%. Kume and Wood \cite{kume2005saddlepoint,kume2013saddlepoint} developed a method to compute the $Z_{000}(B)$ by using saddle-point approximation. It is accurate for the final estimation result when applying this method in doing maximum likelihood estimation, but not accurate enough for evaluating $Z_{000}(B)$. Moreover, the approximation cannot be easily extended to general $Z_{n_1n_2n_3}(B)$. 
Wang \textit{et. al.} \cite{wang2008crucial} used piecewise linear interpolation to compute $B$ from $Z_{n_1n_2n_3}/Z$ where $n_1+n_2+n_3=2$. This approach works well for $B$ not far from zero matrix, but is inaccurate when it is not the case. 
We also mention that in \cite{grosso2000closure} the fourth-order moments 
$Z_{n_1n_2n_3}/Z,\,(n_1+n_2+n_3=4)$, are approximated by polynomials of the second-order moments $Z_{n_1n_2n_3},\,(n_1+n_2+n_3=2)$, with a relative error of $5\times 10^{-4}$. 
This approach is restricted to the cases where $B$ is not involved explicitly. 

In this paper, we introduce a fast and accurate algorithm for evaluating $Z_{n_1n_2n_3}(B)$. 
We divide $B$ into three cases and use different approximation method for each case. 
The main techniques we utilize are interpolation and Gaussian integrals. 
We have implemented the method for $n_1+n_2+n_3\le 4$ in a routine named \texttt{BinghamMoments}. It is freely available online \cite{luo2016BinghamPack}, in which
pre-calculations are done and saved as constants in the routine to raise the real-time efficiency. 
The cost of evaluating $Z_{n_1n_2n_3}$ is reduced to $O(1)$ compared with $O(N^2)$ in numerical integration. 
Numerical experiments show that the absolute error is less than $5\times 10^{-8}$ in the routine, while $10^4$ times faster than adaptive numerical quadrature with the same accuracy. 
We apply the method to a liquid crystal model proposed in \cite{ball2010nematic,han2014microscopic}. 
The model substitutes the polynomial bulk energy in the widely-used Landau-de Gennes theory 
with the entropy term expressed by the Bingham distribution.
By this substitution the order parameters are confined in the physical range,
and it is shown in \cite{han2014microscopic} that this model can be derived from molecular theory.
We examine the defect patterns for rod-like molecules confined in a sphere, and find a different structure from the Landau-de Gennes theory.
The rest of paper is organized as follows. In Sec. \ref{method}, we present the approximation method. The numerical accuracy is examined in Sec. \ref{error}. An application to liquid crystals is given in Sec. \ref{appl}. Concluding remarks are stated in Sec. \ref{concl}.

\section{The approximation method\label{method}}
We diagonalize $B$ using an orthogonal matrix $T$ with $\mbox{det}T=1$,
$$
B=T\mbox{diag}(b_1,b_2,b_3)T^T.
$$
Then the density function becomes
\begin{equation}\label{dens_diag}
f(\bm{x}|B) = \left.\exp \left( \sum_{i=1}^{3} b_i (T^T\bm{x})_i^2 \right)
\middle /
\int_{\mathbb{S}^{2}} \exp \left( \sum_{i=1}^{3} b_i (T^T\bm{x})_i^2 \right) \, \md \bm{x}\right. .
\end{equation}
Thus, by the transformation $\bm{x}\longrightarrow T^T\bm{x}$,
\begin{align}
  Z_{n_1n_2n_3}(B)=&\int_{\mathbb{S}^{2}} x_1^{n_1} x_2^{n_2}x_3^{n_3} {\exp \left( \sum_{i=1}^{3} b_i (T^T\bm{x})_i^2 \right)} \, \md \bm{x} \nonumber\\
  =&\int_{\mathbb{S}^{2}} (T\bm{x})_1^{n_1} (T\bm{x})_2^{n_2}(T\bm{x})_3^{n_3} {\exp \left( \sum_{i=1}^{3} b_i x_i^2 \right)} \, \md \bm{x}
\end{align}
becomes a linear combination of $Z_{m_1m_2m_3}(\mbox{diag}(b_1,b_2,b_3))$.
Furthermore, the distribution $f(\bm{x}|\mbox{diag}(b_1,b_2,b_3))$ is invariant under changes $(b_1, b_2, b_3) \to (b_1 + h, b_2 + h, b_3 + h)$ for any real number $h$. Without loss of generality, we assume that $b_1 \le b_2 \le b_3 = 0$. Denote $Z_{n_1n_2n_3}(b_1,b_2)=Z_{n_1n_2n_3}(\mbox{diag}(b_1,b_2,0))$.
It is easy to note that $Z_{n_1n_2n_3}(b_1,b_2)$ is nonzero only if $n_i$ are even numbers. Then by $x_3^2=1-x_1^2-x_2^2$, we can express $Z_{n_1n_2n_3}(b_1,b_2)$ linearly by $Z_{nm0}(b_1,b_2)$. Hence it suffices to compute $Z_{nm0}(b_1,b_2)$, denoted in abbreviate by $Z_{nm}(b_1,b_2)$.

Choosing a parameter $d > 0$, we divide $(b_1,b_2)\in (-\infty,0]^2$ into three regions,
$$
(-\infty,-d]^2,\quad (-\infty,-d]\times(-d,0]\cup(-d,0]\times(-\infty,-d],\quad (-d,0]^2.
$$
and use different approximation method for each region.
The following Gaussian integral is used in the approximation,
\begin{equation}
  \int_{\mathbb{R}}x^{2n}\exp(-\alpha x^2)\md x= \sqrt{\frac{\pi}{\alpha}}\frac{(2n-1)!!}{(2\alpha)^n},\quad \alpha>0. \label{Gauss}
\end{equation}

\subsection{$b_1,\ b_2 \le -d$ \label{infinite_2}}
We transform the integral domain into the unit circle,
\begin{align}
Z_{nm}(b_1, b_2) = &2 \iint_{x_1^2+x_2^2 < 1} x_1^{n} x_2^{m} \cdot \exp \left( b_1 x_1^2 + b_2 x_2^2 \right) \cdot \frac{1}{ \sqrt{1 - x_1^2-x_2^2} } \, \md x_1\md x_2,\nonumber\\
=&2 \sum_{j,k\ge 0}{j+k \choose j}\frac{(2j+2k-1)!!}{(2j+2k)!!}\iint_{x_1^2+x_2^2 < 1}x_1^{2j+n}x_2^{2k+m} \exp \left( b_1 x_1^2 + b_2 x_2^2 \right)  \, \md x_1\md x_2. \label{platInte}
\end{align}
The series converges because $b_1,b_2<0$. We truncate the series at $j+k\le N_1$.
Moreover, if $d$ is large, then $x_1^{2j+n} x_2^{2k+m}$ increases with polynomial rate, while $ \exp \left( b_1 x_1^2 + b_2 x_2^2 \right)$ decreases with exponential rate.
Thus we expand the integral domain to $\mathbb{R}^2$ in the truncated series, which yields the following approximation formula,
\begin{align}
  \hat{Z}_{nm}(b_1, b_2) = &2 \sum_{j+k\le N_1}{j+k \choose j}\frac{(2j+2k-1)!!}{(2j+2k)!!}\iint_{\mathbb{R}^2}x_1^{2j+n}x_2^{2k+m} \exp \left( b_1 x_1^2 + b_2 x_2^2 \right)\, \md x_1\md x_2.\nonumber\\
  =&\sum_{j+k\le N_1}{j+k \choose j}\frac{(2j+2k-1)!!}{(2j+2k)!!}
  \sqrt{\frac{\pi^2}{b_1b_2}}\frac{(2j+n-1)!!(2k+m-1)!!}{(2b_1)^{j+n/2}(2b_2)^{k+m/2}}. \label{inf_approx1}
\end{align}

\subsection{$b_1 > -d$, $b_2 \le -d$ or $b_1 \le -d$, $b_2 > -d$ \label{infinite_1}}

We explain our approximation method by the case $b_1 \le -d$, $b_2 > -d$.
Rewrite $Z_{nm}( b_1, b_2)$ as
\begin{equation}\label{outCalaExpr}
Z_{nm}(b_1, b_2) = 4 \int_{-1}^{1} x_1^n \cdot \exp \left( b_1 x_1^2 \right) \cdot g_m( b_2, x_1) \, \md x_1,
\end{equation}
where
\begin{equation}
g_m( b_2, x_1) = \int_{0}^{\sqrt{1-x_1^2}} x_2^m \cdot \exp \left( b_2 x_2^2 \right) \cdot \frac{1}{ \sqrt{1 - x_1^2 - x_2^2} } \, \md x_2.
\end{equation}
Denote $a=1-x_1^2$ and $r=x_2/a$, then we have
\begin{align}
  g_m =& \int_{0}^{\sqrt{a}} x_2^m \cdot \exp \left( b_2 x_2^2 \right) \cdot \frac{1}{ \sqrt{a - x_2^2} } \, \md x_2\nonumber\\
  =& a^{m/2} \int_{0}^{1} r^m \cdot \exp \left( b_2 a r^2 \right) \cdot \frac{1}{ \sqrt{1 - r^2} } \, \md r \nonumber \\
  =& a^{m/2}  \cdot \frac{1}{2} \sqrt{\pi } \cdot \frac{\Gamma [(m+1)/2 ]}{\Gamma [ (m+2)/2 ]} \cdot {}_1F_1 \left( \frac{m+1}{2}; \frac{m+2}{2}; b_2 a \right), \nonumber
\end{align}
where 
$$
\Gamma(t)=\int_0^{\infty}x^{t-1}\exp(-x)\md x
$$
is the gamma function, and ${}_1F_1$ denotes the confluent hypergeometric function.

Note that ${}_1F_1(\frac{m+1}{2}; \frac{m+2}{2}; b_2 a )$ is an entire function about $a\in\mathbb{C}$.
Therefore $g_m(b_2,x_1)$ equals to its Taylor's series at $x_1=0$ for $x_1\in(-1,1)$,
$$
g_m( b_2, x_1) =\sum_{j\ge 0} \frac{1}{(2j)!} \left( \frac{\partial^{2j}}{\partial x_1^{2j}} g_m( b_2, 0) \right) x_1^{2j}.
$$

Similar to the case $b_1,b_2\le -d$, we truncate the series at $j\le N_2$. Again noticing $b_1\le -d$, we expand the integral interval in \eqref{outCalaExpr} to $\mathbb{R}$, leading to the approximation formula
\begin{align}
\hat{Z}_{nm}(b_1, b_2) =&4 \sum_{j\le N_2} \frac{1}{(2j)!} \left( \frac{\partial^{2j}}{\partial x_1^{2j}} g_m( b_2, 0) \right)\int_{\mathbb{R}} \exp \left( b_1 x_1^2 \right) x_1^{2j+n} \, \md x_1\nonumber\\
=&4 \sum_{j\le N_2} \frac{1}{(2j)!} \left( \frac{\partial^{2j}}{\partial x_1^{2j}} g_m( b_2, 0) \right)\cdot \sqrt{\frac{\pi}{-b_1}}\frac{(2j+n-1)!!}{(-2b_1)^{j+n/2}}\, .
\label{formulaAppr2}
\end{align}

Next, we explain how to calculate the derivatives ${\partial^{2j}g_m( b_2, 0)}/{\partial x_1^{2j}} $.
Denote
$$
h_1(a)=a^{m/2}, \;\; h_2(a)={}_1F_1 \left( \frac{m+1}{2}; \frac{m+2}{2}; b_2 a \right).
$$
Then we have
\begin{equation}
  \frac{\partial^j g}{\partial a^j} = \frac{1}{2} \sqrt{\pi } \cdot \frac{\Gamma [(m+1)/2 ]}{\Gamma [ (m+2)/2 ]} \cdot \sum_{k=0}^{j} \binom{j}{k} \partial^k_{a} h_1 \cdot \partial^{j-k}_{a} h_2,
\end{equation}
with
\begin{equation}
  \partial^k_{a} h_1=\frac{(m/2)!}{(m/2-k)!} a^{\frac{m}{2}-k}, \; k \leq \frac{m}{2}, \;\;\;\; \;\;\;\; \partial^k_{a} h_1=0, \;  k > \frac{m}{2},
\end{equation}
and
\begin{equation}
  \partial^k_{a} h_2 = b_2^k \left( \frac{m+1}{2} \right)^{(k)} \Big / \left( \frac{m+2}{2} \right)^{(k)} \cdot {}_1F_1 \left( \frac{m+1}{2}+k; \frac{m+2}{2}+k; b_2 a \right)
\end{equation}
where
$$x^{(0)}=1, \;\;\;\; x^{(k)}=x(x+1)(x+2) \cdots (x+k-1)$$
is the rising factorial. 
Along with
$$
\frac{\partial^2 a}{\partial x_1^2} \bigg |_{x_1=0}=-2, \;\;\;\; \frac{\partial^i a}{\partial x_1^i} \bigg |_{x_1=0}=0, \;\; i \neq 2,
$$
and the chain rule, we arrive at
\begin{equation}
  \frac{\partial^{2j}}{\partial x_1^{2j}} g(x_2^m | b_2, x_1) \, \bigg |_{x_1=0} = (-1)^{j} \cdot \frac{(2j)!}{j!} \cdot \frac{\partial^{j} g}{\partial a^{j}} \bigg |_{a=1}.
\end{equation}

The derivatives ${\partial^{2j}g_m( b_2, 0)}/{\partial x_1^{2j}} $ are functions of $b_2$. In the routine \texttt{BinghamMoments}, we precompute the values on grid points $b_2=0.001k$, and compute the values between the grid points by linear interpolation.

\subsection{$b_1 > -d$, $b_2 > -d$ \label{finite_2}}
In this bounded region of $(b_1,b_2)$, we use interpolation for $Z_{00}$ and $Z_{mn}/Z_{00}$.
We compute them and their derivatives about $b_1$, $b_2$,
$$
\frac{\partial Z_{00}}{\partial b_1}=Z_{20}, \quad
\frac{\partial (Z_{nm}/Z_{00})}{\partial b_1}=\frac{Z_{n+2,m}Z_{00}-Z_{nm}Z_{20}}{Z_{00}^2},
$$
on the grid
$(b_{1,2})_j=-j\Delta b,\ 0\le j\le -d/\Delta b$. 
These values are computed in advance and saved as constants in the routine \texttt{BinghamMoments}. 
For $Z_{nm}$ not on the grid points, we calculate with the interpolation described below.
Suppose we already know
$$
f(x_i,y_j), \;\; f_x(x_i,y_j), \;\; f_y(x_i,y_j),\quad j=1,2.
$$
To obtain the approxiamte value $f(x,y)$ on $(x,y) \in [x_1,x_2] \times [y_1,y_2]$, we first calculate
$$ f(x,y_1), \;\; f(x,y_2), \;\; f(x_1,y), \;\; f(x_2,y) $$
with third-order Hermite interpolation,
\begin{eqnarray}
f(x,y_1) &=& f(x_1,y_1) \cdot (1 + 2 \frac{x_1-x}{x_1-x_2}) (\frac{x-x_2}{x_1-x_2})^2 
+ f(x_2,y_1) \cdot (1 + 2 \frac{x_2-x}{x_2-x_1}) (\frac{x-x_1}{x_2-x_1})^2 \nonumber \\
&& + f_x(x_1,y_1) \cdot (x-x_1) (\frac{x-x_2}{x_1-x_2})^2 
+ f_x(x_2,y_1) \cdot (x-x_2) (\frac{x-x_1}{x_2-x_1})^2. \nonumber
\end{eqnarray}
Next we calculate
$$ f_y(x,y_1), \;\; f_y(x,y_2), \;\; f_x(x_1,y), \;\; f_x(x_2,y) $$
with linear interpolation,
$$ f_y(x,y_1) = f_y(x_1,y_1) \frac{x_2-x}{x_2-x_1} + f_y(x_2,y_1) \frac{x-x_1}{x_2-x_1}. $$
Then we can calculate $f(x,y)$ with third order Hermite interpolation by
\begin{eqnarray}
f(x,y) &=& f(x_1,y) \cdot (1 + 2 \frac{x_1-x}{x_1-x_2}) (\frac{x-x_2}{x_1-x_2})^2 
+ f(x_2,y) \cdot (1 + 2 \frac{x_2-x}{x_2-x_1}) (\frac{x-x_1}{x_2-x_1})^2 \nonumber \\
&& + f_x(x_1,y) \cdot (x-x_1) (\frac{x-x_2}{x_1-x_2})^2 
+ f_x(x_2,y) \cdot (x-x_2) (\frac{x-x_1}{x_2-x_1})^2, \label{interp1}
\end{eqnarray}
or
\begin{eqnarray}
f(x,y) &=& f(x,y_1) \cdot (1 + 2 \frac{y_1-y}{y_1-y_2}) (\frac{y-y_2}{y_1-y_2})^2 
+ f(x,y_2) \cdot (1 + 2 \frac{y_2-y}{y_2-y_1}) (\frac{y-y_1}{y_2-y_1})^2 \nonumber \\
&& + f_y(x,y_1) \cdot (y-y_1) (\frac{y-y_2}{y_1-y_2})^2 
+ f_y(x,y_2) \cdot (y-y_2) (\frac{y-y_1}{y_2-y_1})^2. \label{interp2}
\end{eqnarray}
We compute $f(x,y)$ as the average of \eqref{interp1} and \eqref{interp2}. 

\subsection{The value of the parameters}
We have introduced four parameters in the above: the size $d$ for dividing the domain, the order of truncation $N_1$ and $N_2$, and the grid size for the interpolation $\Delta b$.
We choose parameters as $d=30$, $N_1=5$, $N_2=5$, $\Delta b=0.025$ for $Z_{00}$, and $\Delta b=0.1$ for $Z_{nm}/Z_{00}$ in the routine \texttt{BinghamMoments}, achieving maximal absolute error less than $5\times 10^{-8}$ for $Z_{00}$ and $\langle x_1^n x_2^m \rangle,~ n+m\le 4$. We will verify this in Sec. \ref{Numerical Test}. With these parameters, the memory needed for loading precomputed values (including ${\partial^{2j}g_m( b_2, 0)}/{\partial x_1^{2j}} $ in the case \ref{infinite_1}, and the values on the grid points in the case \ref{finite_2}) is about 75MB, which is available for common computers. 

\section{Numerical accuracy\label{error}}
\subsection{Error estimate}
We give an error estimate for the case \ref{infinite_2} with some special functions.
Denote
$$
F(x)=e^{-x^2} \int_{0}^{x} e^{t^2} \, \md t
$$
as the Dawson function,
$$
\gamma(n,x)=\int_0^x t^{n-1}e^{-t}\, \md t
$$
as the lower incomplete gamma function, and
$$
\alpha_n(z)=E_{-n}(z)=n! z^{-n-1} e^{-z} \left ( 1+z+\frac{z^2}{2!}+ \cdots + \frac{z^n}{n!} \right )
$$
as the exponential integral function.

\begin{theorem}
Let $\hat{Z}_{nm}$ be defined in \eqref{inf_approx1} and denote $N=N_1$.
For $b_1,b_2 \le -d$, it holds
\begin{eqnarray} \label{infinite max error}
  |Z_{nm}-\hat{Z}_{nm}| &\leq& 4 \pi \frac{ F(\sqrt{d}\,)}{\sqrt{d}}- 2 \pi  \sum _{j=0}^N \frac{(2 j-1)!!}{(2 j)!!} \cdot d^{-j-1} \gamma (j+1,d) \nonumber \\ && + 2 \pi \sum_{j=0}^{N+\max(n,m)} \frac{(2j-1)!!}{(2j)!!} \alpha_j(d).
\end{eqnarray}
\end{theorem}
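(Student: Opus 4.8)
The plan is to bound the error $|Z_{nm} - \hat Z_{nm}|$ by decomposing it into two sources: the truncation of the series at $j+k \le N$, and the replacement of the unit-disk integration domain by $\mathbb{R}^2$ in the retained terms. Starting from the exact expansion in \eqref{platInte} and the approximation \eqref{inf_approx1}, I would write
$$
Z_{nm} - \hat Z_{nm} = \underbrace{2\sum_{j+k > N} c_{jk} \iint_{x_1^2+x_2^2<1} x_1^{2j+n} x_2^{2k+m} e^{b_1 x_1^2 + b_2 x_2^2}\,\md x_1\md x_2}_{\text{(I), truncation}} \;-\; \underbrace{2\sum_{j+k\le N} c_{jk} \iint_{x_1^2+x_2^2\ge 1} x_1^{2j+n} x_2^{2k+m} e^{b_1 x_1^2 + b_2 x_2^2}\,\md x_1\md x_2}_{\text{(II), tail}},
$$
where $c_{jk} = \binom{j+k}{j}\frac{(2j+2k-1)!!}{(2j+2k)!!}$. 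Both (I) and (II) have positive integrands (since all exponents $2j+n$, $2k+m$ are even), so I can bound $|Z_{nm}-\hat Z_{nm}| \le (\text{I}) + (\text{II})$, and it remains to estimate each sum.

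For the tail term (II), I would use $b_1, b_2 \le -d$ to dominate $e^{b_1 x_1^2 + b_2 x_2^2} \le e^{-d(x_1^2+x_2^2)}$ on $x_1^2+x_2^2 \ge 1$, then pass to polar coordinates $x_1 = \rho\cos\theta$, $x_2 = \rho\sin\theta$. The angular integral of $\cos^{2j+n}\theta\,\sin^{2k+m}\theta$ over $[0,2\pi)$ produces a Beta-function factor that, combined with $c_{jk}$, should telescope the double sum over $j+k\le N$ (with the shift by $n$ or $m$ accounting for the $\max(n,m)$ in the final bound); the radial integral $\int_1^\infty \rho^{2(j+k)+n+m+1} e^{-d\rho^2}\,\md\rho$ becomes, after substituting $t = d\rho^2$, a multiple of the upper incomplete gamma function $\Gamma(\cdot,d)$, which is exactly $\Gamma(j+1) - \gamma(j+1,d)$ up to normalization — this is where $\alpha_j(d)$ enters, since $\alpha_j(d) = j!\,d^{-j-1}e^{-d}\sum_{i=0}^j d^i/i!$ is precisely this normalized upper incomplete gamma value. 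For the truncation term (I), I would first replace the unit disk by $\mathbb{R}^2$ going the other way: bound the full-plane version of the $j+k>N$ sum and subtract, or more cleanly, bound (I) $\le 2\sum_{j+k>N} c_{jk} \iint_{\mathbb{R}^2} x_1^{2j+n} x_2^{2k+m} e^{b_1 x_1^2+b_2 x_2^2}$ — but this diverges, so instead I must keep the unit-disk constraint, bound $e^{b_1x_1^2+b_2x_2^2} \le e^{-d(x_1^2+x_2^2)}$ on the disk, go to polar coordinates, and sum the tail $\sum_{j+k>N}$ of the resulting series in $\rho^2 \le 1$; the generating function $\sum_{j,k} c_{jk} \rho^{2(j+k)} (\cos^2\theta)^j(\sin^2\theta)^k \cdot(\text{angular factor})$ should collapse to something like $(1-\rho^2)^{-1/2}$, recovering the Dawson-function term $F(\sqrt d)/\sqrt d$ after integrating $e^{-d\rho^2}(1-\rho^2)^{-1/2}$, with the partial sum subtracted off giving the middle term involving $\gamma(j+1,d)$.

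The main obstacle I anticipate is the bookkeeping in the angular integrals and making the combinatorial factors $c_{jk}$ combine with the Beta-function values $B(j+\tfrac{n+1}{2}, k+\tfrac{m+1}{2})$ cleanly enough that the double sum over $(j,k)$ collapses to a single sum over $j+k$; the presence of the extra monomial $x_1^n x_2^m$ breaks the perfect symmetry that would make $\sum_{j+k=\ell} c_{jk}(\cos^2\theta)^j(\sin^2\theta)^k$ reduce to a constant, and I expect this is why the final bound carries a $\max(n,m)$ shift in the last sum and why the three terms do not combine into a single closed form. A secondary subtlety is justifying the interchange of summation and integration (dominated convergence, using $b_1,b_2<0$ as noted after \eqref{platInte}) and confirming that the identification of $\alpha_j$, $\gamma$, and $F$ with the resulting incomplete-gamma and error-function integrals matches the normalizations stated in the theorem.
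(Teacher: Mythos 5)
Your proposal follows essentially the same route as the paper: the error is split into the truncation part over the unit disk (your (I), the paper's $e_1$) and the domain-extension tail over $\mathbb{R}^2\setminus B(0,1)$ (your (II), the paper's $e_2$), both with nonnegative integrands, and your identifications — $\sum_{j\ge0}\frac{(2j-1)!!}{(2j)!!}\rho^{2j}=(1-\rho^2)^{-1/2}$, $\int_0^1 r e^{-dr^2}(1-r^2)^{-1/2}\,\md r=F(\sqrt{d}\,)/\sqrt{d}$, $\int_0^1 r^{2j+1}e^{-dr^2}\,\md r=\tfrac12 d^{-j-1}\gamma(j+1,d)$, and $\int_1^{\infty} r^{2j+1}e^{-dr^2}\,\md r=\tfrac12\alpha_j(d)$ — are exactly the ones the paper uses.

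The one point you leave open, handling the monomial $x_1^nx_2^m$, is resolved in the paper by a much cruder device than your Beta-function plan, and this is worth internalizing because your plan as stated would not close. On the unit disk the monomial is simply bounded by $1$; then the angular integral in (I) is just $2\pi$, the binomial sum over $j+k=\ell$ collapses to $1$ by $(\cos^2\theta+\sin^2\theta)^{\ell}=1$, and the $(1-\rho^2)^{-1/2}$ identity applies untouched, giving the first two terms of the bound. Off the disk the monomial is bounded by $(x_1^2+x_2^2)^{M}$ with $M=\max(n,m)$, valid precisely because $\rho\ge1$ there; this shifts the radial powers by $M$ and, after enlarging the index range to $0\le j\le N+M$, yields the last term — so the $\max(n,m)$ shift comes entirely from region (II), not from any angular bookkeeping. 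Your proposed exact angular Beta integrals would not telescope to the stated single-index form and are unnecessary; the interchange of sum and integral you worry about is immediate by monotone convergence once all terms are made nonnegative. In short: same decomposition and same special-function machinery, correct in outline, with your flagged obstacle dissolving under an elementary pointwise bound.
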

\begin{proof}
We can divide the error into two parts:
\begin{align}
 e_1=&  Z_{nm}( b_1, b_2) - 2 \iint_{B(0,1)} x_1^{n} x_2^{m} \exp \left( b_1 x_1^2 + b_2 x_2^2 \right) \sum_{j=0}^{N} \frac{(2j-1)!!}{(2j)!!} \cdot (x_1^2 + x_2^2)^j \, \md \bm{x} \nonumber\\
 =&2 \iint_{B(0,1)} x_1^{n} x_2^{m} \exp \left( b_1 x_1^2 + b_2 x_2^2 \right) \sum_{j>N} \frac{(2j-1)!!}{(2j)!!} (x_1^2 + x_2^2)^j \, \md \bm{x},\\
e_2=&  2 \iint_{\mathbb{R}^2 \setminus B(0,1)} x_1^{n} x_2^{m} \exp \left( b_1 x_1^2 + b_2 x_2^2 \right) \sum_{j=0}^{N} \frac{(2j-1)!!}{(2j)!!} (x_1^2 + x_2^2)^j \, \md \bm{x}.
\end{align}

For $e_1$, we have
\begin{eqnarray}
e_1 &\le&2 \iint_{B(0,1)} \exp \left( -d (x_1^2 + x_2^2) \right) \sum_{j>N} \frac{(2j-1)!!}{(2j)!!} (x_1^2 + x_2^2)^j \, \md \bm{x}\nonumber\\
&=&2 \iint_{B(0,1)} \exp \left( -d (x_1^2 + x_2^2) \right) \left[\frac{1}{\sqrt{1-x_1^2-x_2^2}}-\sum_{j\le N} \frac{(2j-1)!!}{(2j)!!} (x_1^2 + x_2^2)^j \right]\, \md \bm{x}\nonumber\\
&\le& 4 \pi \int_0^1 e^{-d r^2}  \frac{r}{1 - r^2} \, \md r - 4 \pi \sum_{j=0}^{N} \frac{(2j-1)!!}{(2j)!!} \int_0^1 r^{2j+1}  e^{-d r^2} \, \md r \nonumber \\
&=& 4 \pi \frac{ F(\sqrt{d}\,)}{\sqrt{d}}- 2 \pi  \sum _{j=0}^N \frac{(2 j-1)!!}{(2 j)!!}  d^{-j-1} \gamma (j+1,d). \label{err_1_inf}
\end{eqnarray}
In the above, we use the polar coordinate transformation
$x_1=r \cos \theta, \;\; x_2=r \sin \theta$.
For $e_2$, denote $M=\max\{ n, m \}$, then we have
\begin{eqnarray}
e_2 &\leq& 4 \pi \sum_{j=0}^{N+M} \frac{(2j-1)!!}{(2j)!!} \int_1^{\infty} r^{2j+1} e^{-d r^2} \, \md r = 2 \pi \sum_{j=0}^{N+M} \frac{(2j-1)!!}{(2j)!!} \alpha_n(d). \label{err_2_inf}
\end{eqnarray}

Combining \eqref{err_1_inf} and \eqref{err_2_inf}, we get \eqref{infinite max error}.
\end{proof}

\begin{table}
\centering
\begin{tabular}{ccccccc}
  \toprule
  $d$ & 13 & 16 & 20 & 26 \\
  \midrule
  $N_1$ & 5 & 6 & 6 & 6 \\
  \midrule
  Bound & $4.4\times 10^{-5}$ & $3.6\times 10^{-6}$ & $4.5\times 10^{-7}$ & $4.5\times 10^{-8}$ \\
  \bottomrule
\end{tabular}
\caption{Absolute error bound by \eqref{infinite max error} under different values of $d$ and $N_1$ for $n+m\le 4$. }
\label{estimated upper bound}
\end{table}
For our chosen parameters $d=30$ and $N_1=5$, the upper bound given by \eqref{infinite max error} is $6.038 \times 10^{-8}$ for $n+m\le 4$.
We also give the upper bound calculated from \eqref{infinite max error} for a few $d$ and $N_1$ in Table \ref{estimated upper bound}. 
The estimate \eqref{infinite max error} is also helpful to choosing parameters under different demand of accuracy, which will be shown in Table \ref{suggested value}.

\subsection{Numerical Test\label{Numerical Test}}
We compare the results calculated by our method and the results calculated by numerical integration to testify the accuracy of our method numrically.
The parameters in our method are chosen as $N_1=5$, $N_2=5$ and $d=30$.
For numerical integration, we use adaptive Simpson's method to control the absolute error less than $10^{-11}$. We select randomly $10,000$ pairs of $b_i$ for each of the three cases: $b_1,b_2\le -d$, $\max(b_1,b_2)>-d \; \& \; \min(b_1,b_2)\le -d$, and $b_1,b_2>-d$, and calculate $Z$ and the moments $Z_{nm}/Z$ where $n+m=2,4$. 
Table \ref{maxError} shows the maximal absolute errors of $Z$ and $Z_{nm}/Z$ among the $30000$ samples, 
which are under the magnitude of $10^{-8}$. 
In particular, the errors of $Z_{nm}/Z$ are less than $5\times 10^{-8}$. 
We also examine the distribution of the absolute errors of
$Z$ (Figure \ref{errorTest:a}), $Z_{20}/Z$ (Figure \ref{errorTest:b}) and $Z_{04}/Z$ (Figure \ref{errorTest:c})
for the $10000$ samples in each of three cases respectively, 
and find that for most $b_i$ the absolute errors are less than $10^{-10}$.
Moreover, the numerical test also shows our method is very fast.
Calculating all these $30,000$ examples, the adaptive Simpson's method with the target accuracy $5\times 10^{-8}$ spend $3117.761$ seconds while our method only $0.193$ seconds. 
Both routines are written in C and run in the same computer with a CPU clock speed $2.5$GHz. 

\begin{table}
\centering
\begin{tabular}{cccc}
  \toprule
  Moment & $Z$ & $Z_{20}/Z$ & $Z_{02}/Z$ \\
  \midrule
  Maximal error & $6.038\times 10^{-8}$ & $2.030\times 10^{-8}$ & $1.543 \times 10^{-8}$ \\
  \bottomrule \toprule
  Moment & $Z_{40}/Z$ & $Z_{04}/Z$ & $Z_{22}/Z$ \\
  \midrule
  Maximal error & $4.031\times 10^{-9}$ & $2.049\times 10^{-8}$ & $2.098\times 10^{-8}$ \\
  \bottomrule
\end{tabular}
\caption{Maximal absolute error for the $30,000$ pairs of $(b_1,b_2)$. }
\label{maxError}
\end{table}

\begin{figure}
\centering
\subfigure[absolute errors of $Z$] { \label{errorTest:a}
\includegraphics[width=0.3\columnwidth]{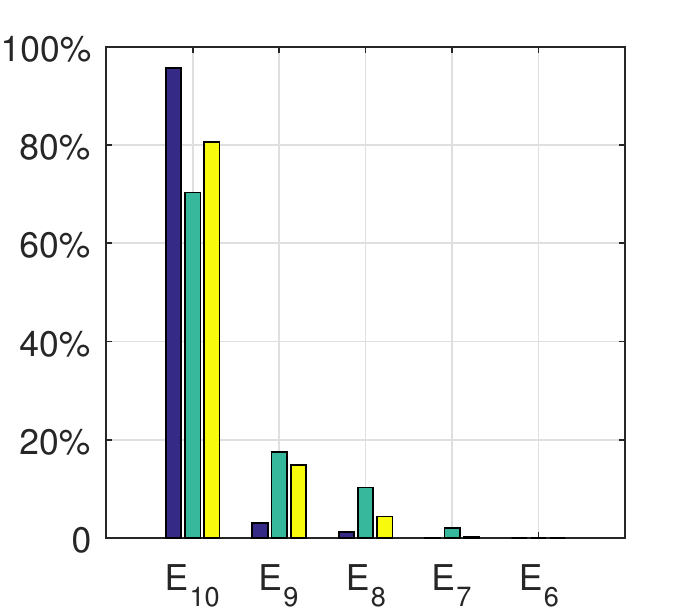}
}
\subfigure[absolute errors of $Z_{20}/Z$] { \label{errorTest:b}
\includegraphics[width=0.3\columnwidth]{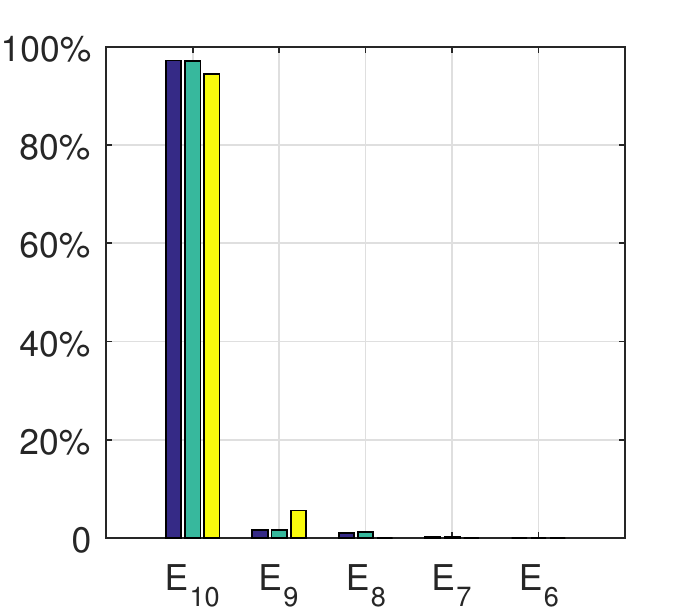}
}
\subfigure[absolute errors of $Z_{04}/Z$] { \label{errorTest:c}
\includegraphics[width=0.3\columnwidth]{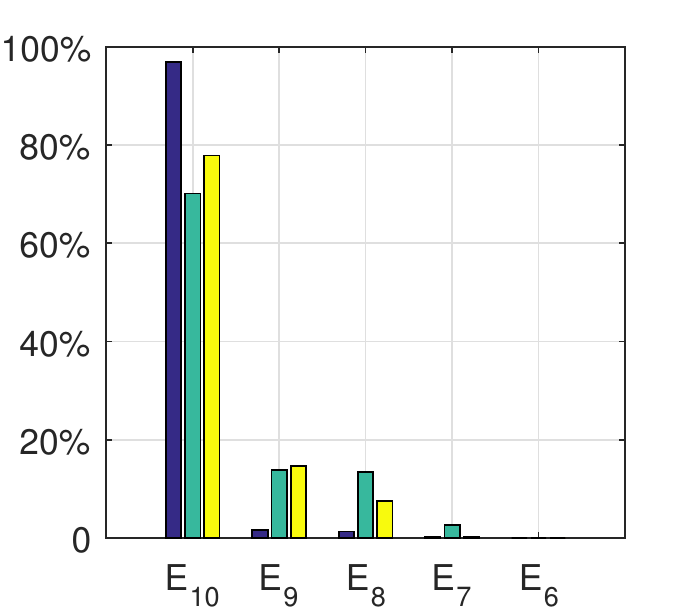}
}
\caption{Distribution of error. Blue bars: $b_1,b_2\le -d$; Green bars: $\max(b_1,b_2)>-d$ \& $\min(b_1,b_2)\le -d$; Yellow bars: $b_1,b_2>-d$. $E_i$ represents the interval $[10^{-i-1}, 10^{-i})$ for $i=7,8,9$. $E_{10}=[0, 10^{-10})$ and $E_6=[10^{-7}, \infty)$. }
\label{errorTest}
\end{figure}

We also give some other suggested values of $d$, $N_1$ and $N_2$ 
in Table \ref{suggested value} for different demanded accuracy for $Z_{nm}/Z$, 
which are also testified numerically with $30,000$ random samples. 
By comparing with the errors in Table \ref{suggested value} and Table \ref{estimated upper bound}, we find that the upper bound given by \eqref{infinite max error} are indicative for the choice of parameters. 
\begin{table}
\centering
\begin{tabular}{ccccccccc}
  \toprule
  Demanded maximal absolute error & $5\times 10^{-5}$ & $5\times 10^{-6}$ & $5\times 10^{-7}$ & $5\times 10^{-8}$ \\
  \midrule
  $d$ & 13 & 16 & 20 & 26 \\
  \midrule
  $N_1$ & 5 & 6 & 6 & 6 \\
  \midrule
  $N_2$ & 4 & 5 & 6 & 6 \\
  \bottomrule
\end{tabular}
\caption{Suggested values of parameters $d$, $N_1$ and $N_2$ under different demanded absolute error. }
\label{suggested value}
\end{table}

\section{Application to liquid crystals\label{appl}}

In this section, we apply our algorithm to a $Q$-tensor model for rod-like liquid crystals. Compared with the original Landau-de Gennes $Q$-tensor theory, the model is able to constrain the tensor within the physical range \cite{ball2010nematic}, and is closely connected to molecular theory \cite{han2014microscopic}. But the Bingham distribution in the model brings difficulty in numerical simulations. We will explain how our fast algorithm accelerates the computation. 

Suppose that the rod-like molecules are confined inside the unit sphere. 
Then the anchoring effect on the spherical surface will induce defects for the alignment of the molecules. 
We consider the following simplified free energy, 
\begin{equation}\label{freeEnergy}
F=\int_{\Omega} \md x\md y\md z\, \Big[ (B : (Q+\frac{I}{3}) - \log Z) - \frac{1}{2} \alpha_1 \left | Q \right |^2 + \frac{1}{2} \alpha_2 \left | \nabla  Q \right |^2\Big]  + F_{p},
\end{equation}
where the region $\Omega$ is chosen as the unit sphere, $I$ is the identity matrix, and
$$
Q_{ij}(\bm{x})=\int_{\mathbb{S}^2}(x_ix_j-\frac{1}{3}\delta_{ij})f(\bm{x}|B)\md S
$$
is a symmetric traceless matrix describing the orientational distribution of rod-like molecules at each spatial point, with $f(\bm{x}|B)$ and $Z=Z_{000}(B)$ defined in \eqref{densOrg} and \eqref{moments}. Here $\delta_{ij}$ is the Kronecker notation.
The first two terms in the integral are the bulk energy describing the nematic phase in equilibrium.
This bulk energy is the only terms distinct from the phenomenological Landau-de Gennes theory, where the bulk energy is given as a polynomial
$$
a_2\mbox{tr}(Q^2)-a_3\mbox{tr}(Q^3)+a_4(\mbox{tr}(Q^2))^2.
$$
The gradient term is the energy contribution of the spatial inhomogeneity. 
The boundary penalty term
$$ F_{p}=\int_{\partial\Omega}\md S\, [Q_{11} xy - Q_{12}(x^2-\frac{1}{3})]^2 + [Q_{12} z - Q_{13} y]^2 + [Q_{22} xy -Q_{12}(y^2-\frac{1}{3})]^2 + [Q_{12} z - Q_{23} x]^2 $$
is added to enforce the value of $Q$ on the sphere to be approximately 
$$
Q = \lambda \left(
\begin{array}{ccc}
  x^2-\frac{1}{3} & xy & xz\\
  xy & y^2-\frac{1}{3} & yz\\
  xz & yz & z^2-\frac{1}{3}
\end{array}
\right).
$$
In fact, if $Q$ is given as above, then $F_{p}=0$.
Our aim is to find local minimizers of the energy functional \eqref{freeEnergy} that describe metastable states.

Express $B$ as $B=T\mbox{diag}(b_1,b_2,0)T^T$, where $T$ is orthogonal with $\mbox{det}T=1$ and can be expressed by Euler angles,
$$
T=\left(
\begin{array}{ccc}
 \cos \alpha  \cos \gamma -\cos \beta  \sin \alpha  \sin \gamma  & \cos \gamma  \sin \alpha +\cos \alpha  \cos \beta  \sin \gamma  &
   \sin \beta  \sin \gamma  \\
 -\cos \beta  \cos \gamma  \sin \alpha -\cos \alpha  \sin \gamma  & \cos \alpha  \cos \beta  \cos \gamma -\sin \alpha  \sin \gamma  &
   \cos \gamma  \sin \beta  \\
 \sin \alpha  \sin \beta  & -\cos \alpha  \sin \beta  & \cos \beta  \\
\end{array}
\right).
$$
In this case, $Q=T\mbox{diag}(q_1,q_2,q_3)T^T$,
where the eigenvalues are given by $q_1=Z_{20}(b_1,b_2)/Z_{00}(b_1,b_2)$,
$q_2=Z_{02}(b_1,b_2)/Z_{00}(b_1,b_2)$, and $q_3=1-q_1-q_2$.

We use the spherical coordinates $(r,\theta,\phi)$ to represent the position, i.e.,
\begin{equation}
  x=r\sin\theta\cos\phi,\quad
  y=r\sin\theta\sin\phi,\quad
  z=r\cos\theta. \label{SphCoord}
\end{equation}
The integral becomes $\int (\cdot)\md x\md y\md z=\int (\cdot)r^2\sin\theta\md r\md\theta \md\phi$,
and the gradient term becomes
\begin{align}
|\nabla Q|^2=|\partial_r Q|^2+\frac{1}{r^2}|\partial_{\theta} Q|^2+\frac{1}{r^2\sin^2\theta}|\partial_{\phi} Q|^2.
\end{align}
The free energy is discretized at $N \times N \times N=32^3$ Gaussian quadrature nodes $(r_j,\theta_k,\phi_l)$ in $[0,1]\times[0,\pi]\times[0,2\pi]$.
At each node $(b_1,b_2,\alpha,\beta,\gamma)^{jkl}$ act as the basic variables,
from which $Q^{jkl}$ is computed.
The gradient term is computed using the spectral-collocation method.
From the value of $Q$ at the discretized nodes, a polynomial
$$
Q(r,\theta,\phi) = \sum_{j=0}^{N-1} \sum_{k=0}^{M-1} \sum_{l=0}^{L-1} c_Q^{jkl} \, r^j \theta ^k \phi ^l
$$
is constructed through interpolation.
The derivatives about $(r,\theta,\phi)$, as well as the values on the boundary,
are then computed from the above polynomial.
We refer to \cite{shen2011spectral} where the details about the spectral-collocation method are illustrated. 
The free energy is minimized using the BFGS method (see, for instance, \cite{avriel2003nonlinear}). 
In the iteration we need to compute the derivatives of $F$ about $(b_i)^{jkl}$,
where fourth moments are involved. For instance,
$$
\frac{\partial }{\partial b_1} Q = T\mbox{diag}(\frac{\partial q_1}{\partial b_1},\frac{\partial q_2}{\partial b_1},\frac{\partial (-q_1-q_2)}{\partial b_1})T^T,
$$
where
$$
\frac{\partial q_1}{\partial b_1}=\frac{Z_{40}Z_{00}-Z^2_{20}}{Z^2_{00}}.
$$
It is worth pointing out that at each point, the value of $Q$ and $Z$ are computed from $B$.
Therefore, our algorithm is executed $O(N^3)$ times in each BFGS iteration step, which greatly accelerates the simulation. 
Another thing is that the Bingham distribution remains the same 
when we alter the parameters $\alpha_{1,2}$, the domain (from sphere to cylinder or ellipse, etc.), and add some terms like in \cite{han2014microscopic}. 
Thus our algorithm is suitable for all these cases. 

Before looking at the results, we first define the biaxiality.
When $Q\neq 0$, we say $Q$ is uniaxial if it has two identical eigenvalues, and is biaxial if it has distinct eigenvalues. Note that $\mbox{tr}Q=0$. 
The biaxiality is measured by
$$
\mu = 1 - 6 \frac{(\mbox{tr}Q^3)^2}{(\mbox{tr}Q^2)^3}.
$$
For uniaxial $Q$, we have $\mu=0$; for biaxial $Q$, we have $0< \mu\le1$.
We examine the defect pattern under different $\alpha_1$ and $\alpha_2$.
At each point, the favored direction of the rod-like molecules is the principal
unit eigenvector $\bm{n}$ of $Q$. While $Q$ is continuous in the unit sphere,
$\bm{n}$ might be discontinuous at the points where $Q=0$ or $Q$ has two identical positive eigenvalues.
Defect patterns are classified by the configuration of these points.

\begin{figure} 
\centering
\subfigure[Radial hedgehog] { \label{LCPhase:a}
\includegraphics[width=0.3\columnwidth]{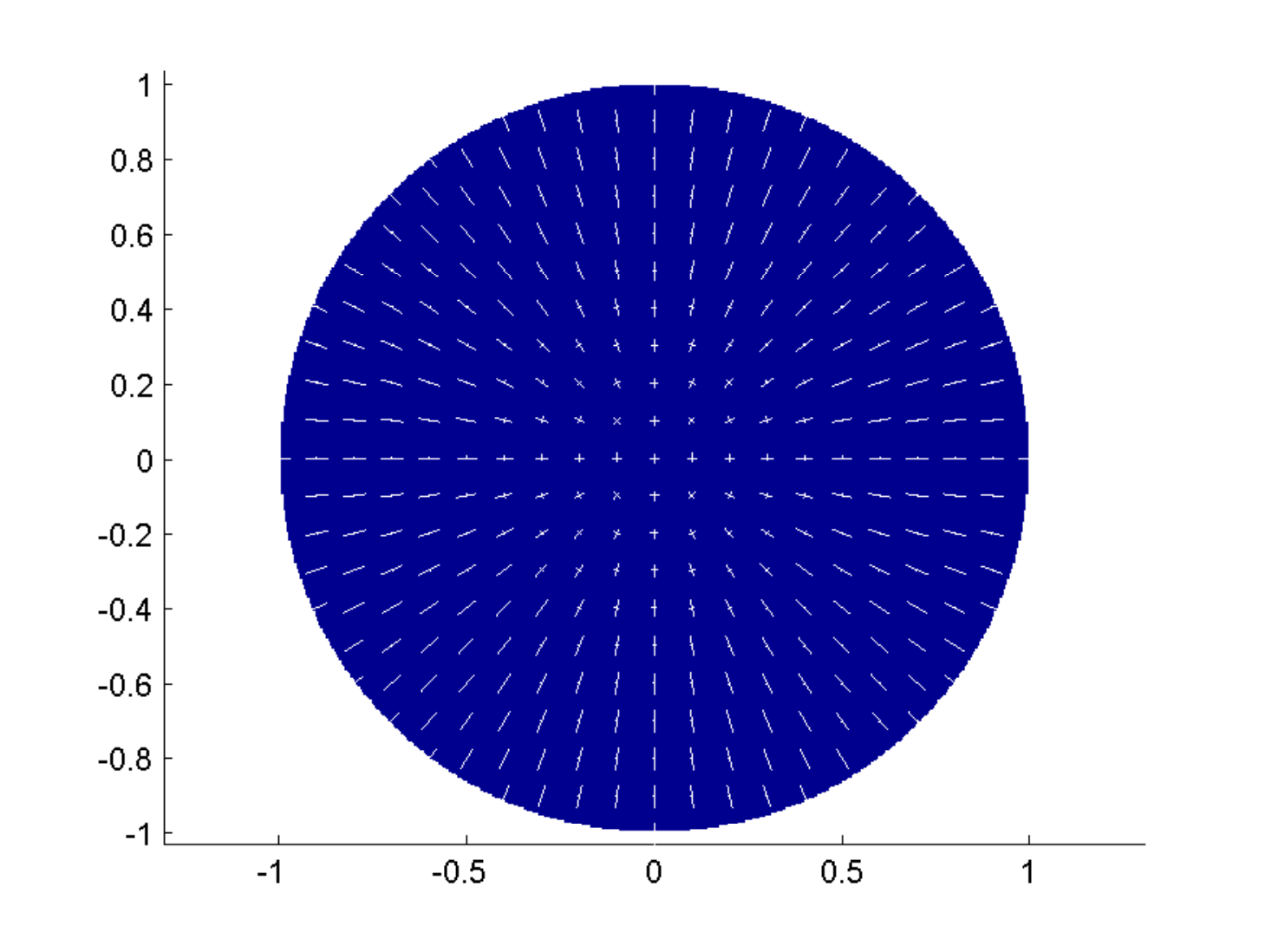}
}
\subfigure[Ring disclination] { \label{LCPhase:b}
\includegraphics[width=0.3\columnwidth]{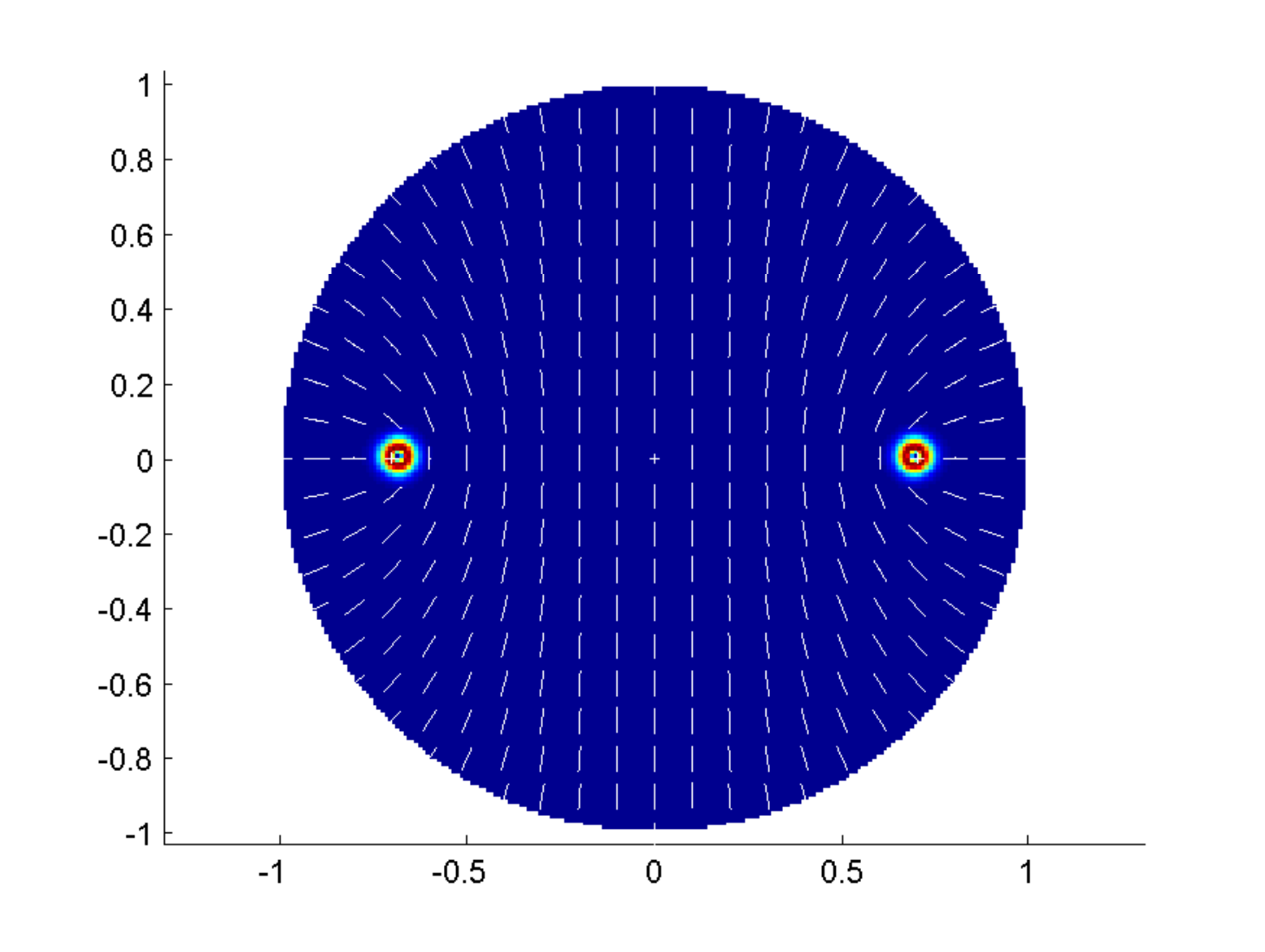}
}
\subfigure[Sphere ring band] { \label{LCPhase:c}
\includegraphics[width=0.3\columnwidth]{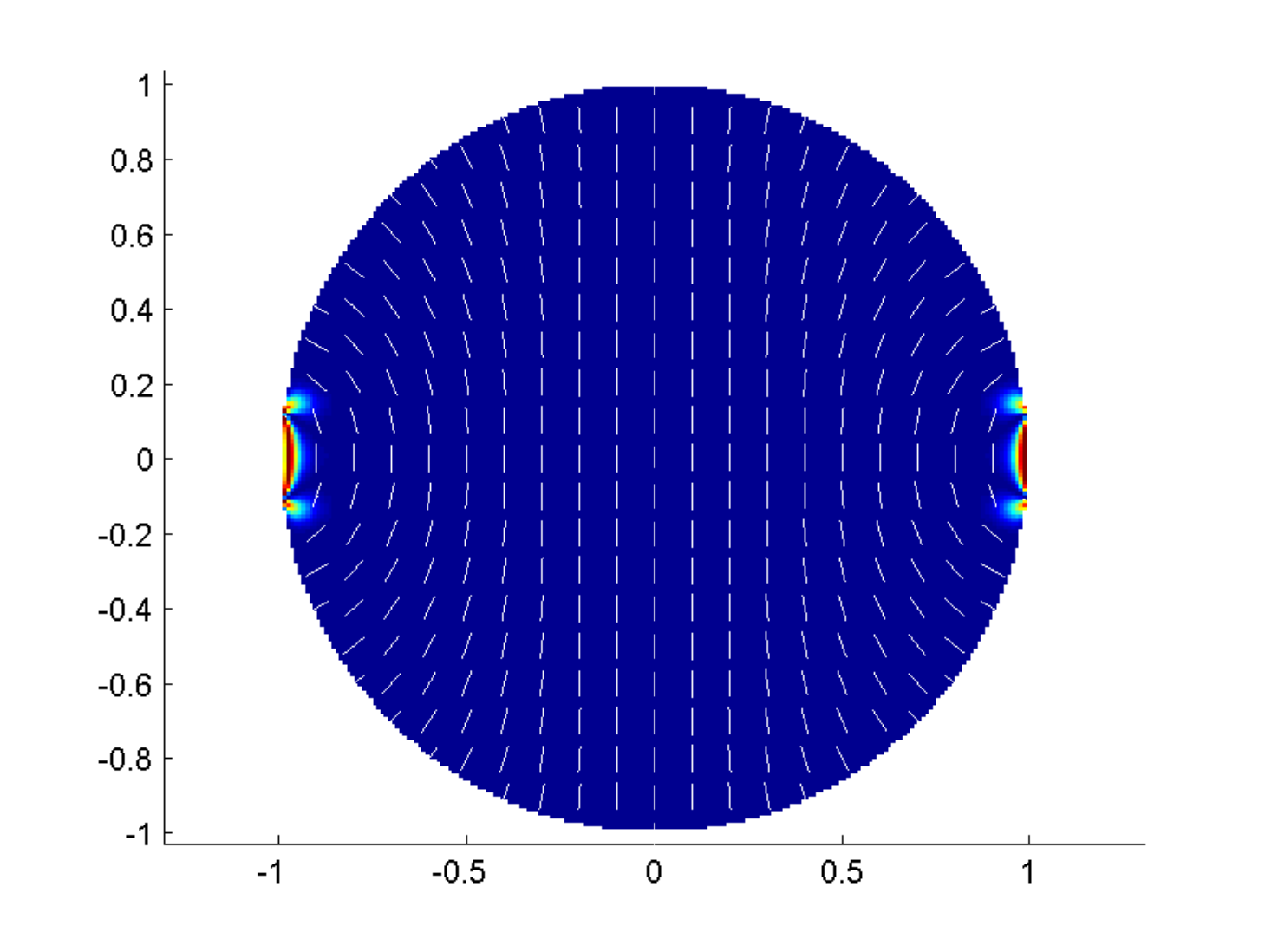}
}
\caption{Three axisymmetric defect patterns, shown by the slice of $x_2$-$x_3$ plane, where $x_3$ is the axis of symmetry. White rods represent principal eigenvectors. The background color describes the biaxiality $\mu$, with red indicates biaxial and blue indicates uniaxial. In all three cases $\alpha_2=0.04$, and $\alpha_1$ are chosen as: (a) $\alpha_1=11$; (b) $\alpha_1=16$; (c) $\alpha_1=22$.}
\label{LCPhase}
\end{figure}

We fix $\alpha_2=0.04$ and let $\alpha_1$ vary.
Three defect patterns are observed and drawn in Figure \ref{LCPhase}:
radial hedgehog (Figure \ref{LCPhase:a}), when $\alpha_1=11$;
ring disclination (Figure \ref{LCPhase:b}), when $\alpha_1=16$;
sphere ring band (Figure \ref{LCPhase:c}), when $\alpha_1=22$.
In the radial hedgehog pattern, $Q$ is uniaxial everywhere
with the principal eigenvector along the radial direction. The sphere center,
where $Q=0$, is the only point defect.
In the ring disclination pattern, the points where $Q$ has two identical positive eigenvalues form a circle in the $x$-$y$ plane, round which is a torus of biaxial region. In the sphere ring band pattern, the points where $Q=0$ form two rings on the spherical surface. In the band between these two rings on the spherical surface, $Q$ has two identical positive eigenvalues. A strong biaxial region is observed inside the sphere near the band. The last pattern is not found in the Landau-de Gennes theory \cite{hu2016disclination}. We believe that this novel pattern come from the term $B:(Q+I/3)-\log Z$, since it is the only term different from the Landau-de Gennes theory. Hence, it is necessary for this model to be further examined.

\section{Conclusion\label{concl}}

We develop a fast and accurate algorithm to evaluate the moments of Bingham distribution. Numerical test shows that it is remarkbly faster than direct numerical quadrature, while maintaining high accuracy. We apply the algorithm to the liquid crystal model that contains the Bingham distribution, which is able to constrain the order parameters within the physical range. We examine the defect patterns of liquid cystals confined inside a sphere and find a novel pattern,
suggesting that the model be examined thoroughly and compared with the Landau-de Gennes theory in future studies.
Armed with our algorithm, these studies will become much less expensive computationally.

\vspace{12pt}
\textbf{Acknowledgment} Pingwen Zhang is supported by National Natural Science Foundations of China (Grant No. 11421101 and No. 11421110001).

\bibliographystyle{plain}
\bibliography{references}
\end{document}